\newtheorem{theorem}{Theorem}%
\newtheorem{proposition}[theorem]{Proposition}%
\newtheorem{example}[theorem]{Example}%
\newtheorem{remark}[theorem]{Remark}%
\begin{document}

\title{On the restriction to unitarity for rational approximations to the exponential function}
\author{Tobias Jawecki\footnote{Email: tobias.jawecki@gmail.com}}

\abstract{In the present work we consider rational best approximations to the exponential function that minimize a uniform error on a subset of the imaginary axis. Namely, Chebyshev approximation and unitary best approximation where the latter is subject to further restriction to unitarity, i.e., requiring that the imaginary axis is mapped to the unit circle. We show that Chebyshev approximants are not unitary, and consequently, distinct to unitary best approximants. However, unitary best approximation attains at most twice the error of Chebyshev approximation, and thus, the restriction to unitarity is not a severe restriction in a practical setting. Moreover, Chebyshev approximation and unitary best approximation attain the same asymptotic error as the underlying domain of approximation shrinks to the origin.}
\keywords{unitary, exponential function, rational approximation}
\pacs[MSC2020]{33B10, 41A20, 41A50}
\maketitle

\section{Introduction}

We consider rational approximations to the exponential function on a subset of the imaginary axis, namely,
\begin{equation}\label{eq:defapprox}
r(\mathrm{i} x) = p(\mathrm{i} x)/q(\mathrm{i} x) \approx \mathrm{e}^{\mathrm{i} \omega x},~~~\omega \geq 0,~~x\in[-1,1],
\end{equation}
where $p$ and $q$ are polynomials of degree $\leq n$, and $\omega$ is also referred to as {\em frequency}.
While $r$ in~\eqref{eq:defapprox} might be most accurately referred to as an approximation to $\mathrm{e}^{\omega z}$ for $z\in\mathrm{i} [-1,1]$, we abuse notation slightly and refer to $r$ as an approximation to $\mathrm{e}^{\mathrm{i}\omega x}$ and $\mathrm{e}^{\omega z}$ interchangeably throughout the present work, as convenient in a particular context.
The focus of the present work is on rational approximations which minimize the uniform error
\begin{equation}\label{eq:uniformerror}
\max_{x\in[-1,1]} |r(\mathrm{i} x) - \mathrm{e}^{\mathrm{i} \omega x}|.
\end{equation}
To this end, for the uniform norm of a complex function $g$ we use the notation
\begin{equation*}
\| g \| := \max_{x\in[-1,1]} |g(\mathrm{i} x)|,
\end{equation*}
and for the set of rational functions with polynomials of degree $\leq n$ in the numerator and denominator we use the notation
\begin{equation*}
\mathcal{R}_n := \{p/q~|~\text{where $p$ and $q$ are polynomials of degree $\leq n$ and $q\not\equiv 0$}\}.
\end{equation*}
A rational function $ r^c\in\mathcal{R}_n$ which minimizes the uniform error~\eqref{eq:uniformerror} is referred to as {\em Chebyshev approximant}, i.e.,
\begin{equation}\label{eq:bestrat}
\| r^c - \exp(\omega \cdot)\|
= \min_{r\in\mathcal{R}_n} \| r - \exp(\omega \cdot)\| ~~~ =: E_{n,\omega}^c.
\end{equation}
In the present setting Chebyshev approximants exist~\cite[Theorem~III]{Wa31}, but might not be unique as in general uniqueness of complex Chebyshev approximants is not always given~\cite{SV78,GT83,Gu83}. To our knowledge more specific results for rational Chebyshev approximation to $\mathrm{e}^{\mathrm{i} \omega x}$ are not available in literature. Thus, for the characterization of Chebyshev approximants~\eqref{eq:bestrat} we may only refer to general results on complex rational Chebyshev approximation, for instance~\cite{Wi79,Wu80,Gu83,Ru85,TI93} and related works with a focus on algorithms~\cite{EW76,IT93}. We remark that equioscillating error curves are a typical property of Chebyshev approximation in a real setting, see~\cite[Chapter~23]{Tre13} for instance, but similar results do not hold true for complex problems such as~\eqref{eq:bestrat} in general.

\smallskip
Besides Chebyshev approximation we also consider unitary best approximation to $\mathrm{e}^{\mathrm{i} \omega x}$ which was recently introduced in \cite{JS23u}.
A rational function $r\in\mathcal{R}_n$ is referred to as unitary if
\begin{equation*}
|r(\mathrm{i} x)| = 1,~~~x\in\mathbb{R},
\end{equation*}
and the set of unitary rational functions for a degree $n$ is denoted as
\begin{equation*}
\mathcal{U}_n := \{r\in\mathcal{R}_n~|~\text{$|r(\mathrm{i} x)|=1$ for $x\in\mathbb{R}$}\}\subset\mathcal{R}_n.
\end{equation*}
A unitary rational function $r^u\in\mathcal{U}_n$ which minimizes the uniform error~\eqref{eq:uniformerror} in $\mathcal{U}_n$ is referred to as {\em unitary best approximant}, i.e.,
\begin{equation*}
\| r^u - \exp(\omega \cdot)\|
= \min_{r\in\mathcal{U}_n} \| r - \exp(\omega \cdot)\|
~~~ =: E_{n,\omega}^u.
\end{equation*}
For existence of unitary best approximants we refer to~\cite[Section~3]{JS23u}. Moreover, following~\cite[Section~5]{JS23u} the unitary best approximation is uniquely characterized by an equioscillating {\em phase error} for $\omega\in(0,(n+1)\pi)$. Otherwise, for $\omega=0$ the approximation $r^u\equiv 1$ is exact, and for $\omega\geq (n+1)\pi$ all $r\in\mathcal{U}_n$ attain the minimal error $E^u_{n,\omega}=2$~\cite[Proposition~4.5]{JS23u}.

Moreover, the unitary best approximant can be computed with sufficient accuracy for practical choice of $n$ and $\omega$ which makes this approach practical. For this purpose the use of the AAA--Lawson method~\cite{NST18,NT20} or interpolation-based algorithms motivated by the brasil algorithm~\cite{Ho21} are suggested in~\cite{JS23u}. Unitarity of approximations generated by these algorithms goes back to~\cite{JS24}.

\smallskip
Rational approximation to the exponential function has some relevance for the matrix exponential and time integration of differential equations~\cite{ML03}. In particular, approximations to $\mathrm{e}^{\mathrm{i} \omega x}$ are related to numerical integration of skew-Hermitian systems and occur in equations of quantum mechanics such as the Schr{\"o}dinger equation~\cite{Lu08}. In this context, unitary and symmetric approximations have some relevance for geometric numerical integration~\cite{HLW06}. Following~\cite{JS23u}, the unitary best approximation satisfies these properties, as well as stability properties related to~\cite{HW02}. Thus, the restriction to unitarity has some benefits when considering best approximations for applications related to differential equations.

\smallskip
While the unitary best approximation satisfies some desirable properties, the Chebyshev approximation is potentially more accurate since $\mathcal{R}_n\supset \mathcal{U}_n$ implies $ E_{n,\omega}^c \leq E_{n,\omega}^u$. It hasn't been previously explored if the restriction to unitarity has a critical impact on accuracy, or whether Chebyshev approximants could be unitary and coincide with the unitary best approximant.

In the present work, we show that the unitary best approximation attains at most twice the error of the Chebyshev approximation, and Chebyshev approximants are not unitary, namely, $E_{n,\omega}^u/2 \leq E_{n,\omega}^c < E_{n,\omega}^u$ for $\omega>0$ (Theorem~\ref{thm:errinequals}). Moreover, for $\omega\to 0$ the asymptotic errors of the unitary best approximation and Chebyshev approximation coincide (Proposition~\ref{prop:asymerr}).

\section{Main results}


\begin{theorem}\label{thm:errinequals}
Provided $\omega >0$, the uniform errors $E_{n,\omega}^u$ and $E_{n,\omega}^c$ of the unitary best approximation and the Chebyshev approximation, respectively, satisfy
\begin{equation}\label{eq:inequalityerrors}
E_{n,\omega}^u/2 \leq E_{n,\omega}^c < E_{n,\omega}^u.
\end{equation}
This includes the case $\omega\geq (n+1)\pi$ for which the Chebyshev approximant corresponds to $r^c \equiv 0$ with $E_{n,\omega}^c=1$, and all $r^u\in\mathcal{U}_n$ are unitary best approximants with $E_{n,\omega}^u=2$. 
Moreover, for the trivial case $\omega=0$ the best approximants $r^c\equiv r^u\equiv 1$ are exact, i.e., $E_{n,\omega}^u= E_{n,\omega}^c=0$.
\end{theorem}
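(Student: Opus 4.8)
I would split \eqref{eq:inequalityerrors} into the strict upper bound $E_{n,\omega}^c<E_{n,\omega}^u$ and the lower bound $E_{n,\omega}^u\le 2E_{n,\omega}^c$, and treat the boundary values of $\omega$ separately. The case $\omega=0$ is immediate since $\exp(0\cdot)\equiv 1\in\mathcal{U}_n$. For $\omega\ge(n+1)\pi$ I would take $E_{n,\omega}^u=2$ from \cite[Proposition~4.5]{JS23u}; the bound $E_{n,\omega}^c\le 1$ is witnessed by $r^c\equiv 0$ (for which $\|r^c-\exp(\omega\cdot)\|=1$), and for the reverse bound I would argue by winding. If some $r=p/q\in\mathcal{R}_n$ (in lowest terms) had $\|r-\exp(\omega\cdot)\|<1$, then $r$ has neither a zero nor a pole on $\mathrm{i}[-1,1]$ and $r(\mathrm{i}x)\mathrm{e}^{-\mathrm{i}\omega x}=1+\eta(x)$ with $|\eta(x)|<1$ on $[-1,1]$, so $\arg r(\mathrm{i}x)=\omega x+\arg(1+\eta(x))$ is a continuous branch whose net change over $[-1,1]$ has modulus exceeding $2\omega-\pi\ge(2n+1)\pi$; on the other hand this net change equals the difference of the net changes of $\arg p(\mathrm{i}x)$ and $\arg q(\mathrm{i}x)$, each of which is a sum of at most $n$ angles subtended at the origin by a segment not containing the origin and hence of modulus below $n\pi$, giving a total below $2n\pi$, a contradiction. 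Thus $E_{n,\omega}^c=1$ and $E_{n,\omega}^u/2=1=E_{n,\omega}^c<2=E_{n,\omega}^u$.

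For $\omega\in(0,(n+1)\pi)$, since $\exp(\omega\cdot)$ is not rational we have $E_{n,\omega}^c>0$, and the upper bound reduces to showing that no Chebyshev approximant is unitary. Suppose a Chebyshev approximant $r^c$ were unitary. Applying the identity $|(1-\varepsilon)a-b|^2=\varepsilon^2+(1-\varepsilon)|a-b|^2$, valid for $|a|=|b|=1$, pointwise with $a=r^c(\mathrm{i}x)$ and $b=\mathrm{e}^{\mathrm{i}\omega x}$ yields $\|(1-\varepsilon)r^c-\exp(\omega\cdot)\|^2\le\varepsilon^2+(1-\varepsilon)(E_{n,\omega}^c)^2<(E_{n,\omega}^c)^2$ for every $\varepsilon\in(0,(E_{n,\omega}^c)^2)$; as $(1-\varepsilon)r^c\in\mathcal{R}_n$ this contradicts minimality of $E_{n,\omega}^c$. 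Hence no Chebyshev approximant is unitary, and in particular, were $E_{n,\omega}^c=E_{n,\omega}^u$ to hold, a unitary best approximant (which exists by \cite[Section~3]{JS23u}) would minimise the error over all of $\mathcal{R}_n$, i.e.\ would be a unitary Chebyshev approximant --- impossible. Therefore $E_{n,\omega}^c<E_{n,\omega}^u$.

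For the lower bound the case $E_{n,\omega}^c\ge 1$ is immediate, since $1\in\mathcal{U}_n$ gives $E_{n,\omega}^u\le\|1-\exp(\omega\cdot)\|\le 2\le 2E_{n,\omega}^c$. So assume $E_{n,\omega}^c<1$; then a Chebyshev approximant $r^c=p/q$ obeys $|r^c(\mathrm{i}x)|\ge 1-E_{n,\omega}^c>0$ on $[-1,1]$, and the function $x\mapsto r^c(\mathrm{i}x)/|r^c(\mathrm{i}x)|$ is continuous and unimodular on $[-1,1]$ with uniform distance to $x\mapsto\mathrm{e}^{\mathrm{i}\omega x}$ at most $\max_x\bigl||r^c(\mathrm{i}x)|-1\bigr|+\max_x|r^c(\mathrm{i}x)-\mathrm{e}^{\mathrm{i}\omega x}|\le 2E_{n,\omega}^c$. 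The step I expect to be the main obstacle is that this radial projection is not a rational function in general, so one must exhibit an honest $r^u\in\mathcal{U}_n$ (of degree $\le n$) at distance $\le 2E_{n,\omega}^c$ from $\exp(\omega\cdot)$; equivalently, that $\inf_{b\in\mathcal{U}_n}\|r^c-b\|\le E_{n,\omega}^c$, after which $E_{n,\omega}^u\le\inf_{b\in\mathcal{U}_n}\|r^c-b\|+E_{n,\omega}^c\le 2E_{n,\omega}^c$. A natural candidate keeping the degree and manifest unitarity is the additive symmetrisation $r^u:=(ap+\widetilde q)/(q+\bar a\widetilde p)$ with $\widetilde p(z):=\overline{p(-\bar z)}$ and $\widetilde q(z):=\overline{q(-\bar z)}$, which has degree $\le n$ and satisfies $|r^u(\mathrm{i}x)|=1$ whenever its denominator does not vanish identically, since on $\mathrm{i}\mathbb{R}$ the numerator is the complex conjugate of the denominator; the remaining work is to choose the unimodular parameter $a$ --- equivalently, a common unimodular rescaling of $p$ and $q$ --- so that this denominator stays bounded away from zero on $\mathrm{i}[-1,1]$, and then to estimate $\|r^u-\exp(\omega\cdot)\|$ against $E_{n,\omega}^c$ using that $r^u(\mathrm{i}x)$ and $\mathrm{e}^{\mathrm{i}\omega x}$ both lie on the unit circle, so the error is controlled by their phase difference, which in turn stays close to the phase difference between $r^c(\mathrm{i}x)$ and $\mathrm{e}^{\mathrm{i}\omega x}$. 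Combining this with the upper bound gives \eqref{eq:inequalityerrors}; alternatively, one might instead carry the lower bound out through the equioscillating phase error characterisation of $r^u$ in \cite[Section~5]{JS23u}.
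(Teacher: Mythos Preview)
Your argument for the strict inequality $E_{n,\omega}^c<E_{n,\omega}^u$ via the perturbation $(1-\varepsilon)r^c$ is correct and more elementary than the paper's route through the local Kolmogorov criterion (the paper shows that $r^u$ violates this criterion by testing against $\widehat q\equiv 1$, which in essence is the infinitesimal version of your perturbation). Likewise your winding argument for $E_{n,\omega}^c=1$ when $\omega\ge(n+1)\pi$ is valid and replaces the paper's zero-counting lemma in that regime.

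The genuine gap is the lower bound $E_{n,\omega}^u\le 2E_{n,\omega}^c$ for $\omega\in(0,(n+1)\pi)$. Your symmetrisation $r^u=(ap+\widetilde q)/(q+\bar a\widetilde p)$ is indeed unitary, but you have not shown --- and it is not clear how to show --- that for some unimodular $a$ it satisfies $\|r^u-\exp(\omega\cdot)\|\le 2E_{n,\omega}^c$; the radial projection $r^c/|r^c|$ is not rational, and there is no evident mechanism by which the symmetrisation inherits its error bound. The paper abandons the idea of building a good unitary approximant from $r^c$ altogether and instead works from the other side: by \cite[Section~5]{JS23u} the unitary best approximant $r^u$ interpolates $\mathrm{e}^{\mathrm{i}\omega x}$ at $2n+1$ nodes $x_1<\cdots<x_{2n+1}$ interlaced with $2n+2$ equioscillation points $\eta_j$ at which $|r^u(\mathrm{i}\eta_j)-\mathrm{e}^{\mathrm{i}\omega\eta_j}|=E_{n,\omega}^u$. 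If some $r\in\mathcal{R}_n$ satisfied $\|r-\exp(\omega\cdot)\|<E_{n,\omega}^u/2$, then setting $\zeta(x)=r^u(\mathrm{i}x)$ one has $|\zeta(x_j)-r(x_j)|<E_{n,\omega}^u/2$ and $|\zeta(\eta_j)-r(\eta_j)|>E_{n,\omega}^u/2$, so by continuity $|\zeta(x)-r(x)|^2-(E_{n,\omega}^u/2)^2$ vanishes at least once in each of the $4n+2$ intervals $(\eta_j,x_j)$ and $(x_j,\eta_{j+1})$; but for real $x$ this is a real rational function of degree at most $(4n,4n)$, hence identically zero, contradicting its sign at any $x_j$. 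Your final sentence gestures toward the equioscillation characterisation, but it is precisely this de la Vall\'ee Poussin-type counting argument --- not any construction starting from $r^c$ --- that closes the gap.
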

\begin{proof}
The proof of this theorem is topic of Section~\ref{sec:errorineq}.
\end{proof}


While Theorem~\ref{thm:errinequals} shows that the errors of the unitary best approximation and Chebyshev approximations are distinct for $\omega>0$, the following proposition shows that these approximations attain the same asymptotic error in the limit $\omega\to 0$.
\begin{proposition}\label{prop:asymerr}
The unitary best approximation and the Chebyshev approximation attain the asymptotic error
\begin{equation}\label{eq:asymerr}
E_{n,\omega}^u,~E_{n,\omega}^c
= \frac{2^{-2n} (n!)^2}{(2n)!(2n+1)!} \omega^{2n+1}
+ \mathcal{O}(\omega^{2n+2}),~~~\omega\to 0.
\end{equation}
\end{proposition}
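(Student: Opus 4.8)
The plan is to sandwich both errors between known quantities. By Theorem~\ref{thm:errinequals} we have $E_{n,\omega}^u/2 \le E_{n,\omega}^c < E_{n,\omega}^u$, so it suffices to pin down the leading-order asymptotics of *one* of the two errors and then argue the factor-of-two gap is absorbed into higher order — but that cannot be right as stated, since a factor of $2$ is not $1+\mathcal{O}(\omega)$. So instead the real strategy must be: establish the leading term for $E_{n,\omega}^c$ directly (or for $E_{n,\omega}^u$ directly), and separately establish the *matching* leading term for the other, showing that in the limit $\omega \to 0$ the inequality $E_{n,\omega}^u/2 \le E_{n,\omega}^c$ is far from tight and in fact $E_{n,\omega}^c = E_{n,\omega}^u(1+o(1))$. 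Concretely, I would proceed as follows.

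First I would recall the classical Padé-type result: the $(n,n)$ Padé approximant $r_{n,n}$ to $\mathrm{e}^{z}$ satisfies $\mathrm{e}^{z} - r_{n,n}(z) = (-1)^n \frac{(n!)^2}{(2n)!(2n+1)!} z^{2n+1} + \mathcal{O}(z^{2n+2})$ (this is the standard error formula for diagonal Padé approximation of the exponential, see e.g.\ Baker--Graves-Morris). Evaluating on $z = \mathrm{i}\omega x$ with $x\in[-1,1]$ and taking the uniform norm gives $\|r_{n,n}(\mathrm{i}\omega\,\cdot) - \exp(\mathrm{i}\omega\,\cdot)\| = \frac{(n!)^2}{(2n)!(2n+1)!}\omega^{2n+1} + \mathcal{O}(\omega^{2n+2})$. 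Wait — the claimed constant carries an extra $2^{-2n}$ factor, so the relevant comparison function is not the full diagonal Padé approximant but rather reflects an additional gain; the natural candidate is that the optimal rational approximant interpolates $\mathrm{e}^{\mathrm{i}\omega x}$ at $2n+1$ Chebyshev-distributed points on $[-1,1]$ rather than all at the origin, and the factor $2^{-2n}$ is exactly the ratio between the sup-norm of the monic Chebyshev polynomial $T_{2n+1}$ (scaled) and that of $x^{2n+1}$ on $[-1,1]$, namely $2^{-2n}$. So the genuine plan is:

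\emph{Step 1 (upper bound for $E_{n,\omega}^c$).} Construct an explicit near-optimal rational function — e.g.\ the rational interpolant of $\mathrm{e}^{\mathrm{i}\omega x}$ of type $(n,n)$ at the $2n+1$ roots of the Chebyshev polynomial $T_{2n+1}$ on $[-1,1]$ (or equivalently a Padé--Chebyshev/linearized least-squares construction), expand its error for small $\omega$, and show the leading term is $\frac{2^{-2n}(n!)^2}{(2n)!(2n+1)!}\omega^{2n+1}$. Since $E_{n,\omega}^c$ is the minimum over $\mathcal{R}_n$, this gives the upper bound $E_{n,\omega}^c \le \frac{2^{-2n}(n!)^2}{(2n)!(2n+1)!}\omega^{2n+1} + \mathcal{O}(\omega^{2n+2})$.

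\emph{Step 2 (matching lower bound for $E_{n,\omega}^c$).} For any $r = p/q \in \mathcal{R}_n$ with $\|r - \exp(\omega\,\cdot)\|$ small, a standard argument shows $q$ cannot vanish near $[-1,1]$ for small $\omega$, and $r - \exp(\omega z)$ must have at least $2n+1$ sign-changes / a controlled number of zeros, forcing $q(z)\mathrm{e}^{\omega z} - p(z)$ to agree with $c\,\omega^{2n+1}(\text{monic degree-}(2n+1)\text{ poly}) + \text{h.o.t.}$; minimizing the sup-norm of a monic degree-$(2n+1)$ polynomial on $[-1,1]$ yields the Chebyshev polynomial with sup-norm $2^{-2n}$, giving the matching lower bound. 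Alternatively, and more cleanly, I would invoke a de Montessus / Walsh-type characterization or the Newman--type lower bound, but the Chebyshev-polynomial extremal argument is the transparent one.

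\emph{Step 3 (transfer to $E_{n,\omega}^u$).} From Theorem~\ref{thm:errinequals}, $E_{n,\omega}^c < E_{n,\omega}^u$; for the reverse asymptotic direction I would exhibit a *unitary* competitor achieving the same leading order — the natural choice is $r^u = p/p^{*}$ where $p^{*}(z) = \overline{p(-\bar z)}$ and $p$ is a suitable polynomial, e.g.\ related to the denominator of the diagonal Padé approximant, for which $\mathrm{e}^{\mathrm{i}\omega x} - p/p^{*}$ has the same leading term (indeed the diagonal Padé approximant to $\mathrm{e}^z$ is itself already unitary on $\mathrm{i}\mathbb{R}$, since its denominator is $p(-z)$ with $p$ the numerator — this is a classical fact). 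Thus $E_{n,\omega}^u \le$ the same Padé-derived bound, but that bound has constant $\frac{(n!)^2}{(2n)!(2n+1)!}$ without the $2^{-2n}$. To recover the sharp constant with $2^{-2n}$ for $E_{n,\omega}^u$ as well, I would instead use the equioscillation characterization of $r^u$ from \cite[Section~5]{JS23u}: the phase error equioscillates at $2n+2$ points, which for small $\omega$ forces the phase-error function to behave like $\omega^{2n+1}$ times a Chebyshev polynomial of degree $2n+1$, again contributing the $2^{-2n}$. Combining Steps 2 and 3 with the squeeze $E_{n,\omega}^c \le E_{n,\omega}^u$ and the two matching leading terms yields \eqref{eq:asymerr}.

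The main obstacle I anticipate is Step 3 — proving that the *unitary* best error carries exactly the same leading constant, including the $2^{-2n}$ factor. The inequality $E_{n,\omega}^c < E_{n,\omega}^u$ only gives a lower bound for $E_{n,\omega}^u$; the upper bound requires either a clever explicit unitary near-minimizer whose error expansion exhibits the Chebyshev-polynomial extremal structure (not merely the Padé/monomial structure), or a careful small-$\omega$ perturbation analysis of the equioscillation conditions from \cite{JS23u} showing the $2n+2$ equioscillation points converge to the Chebyshev extrema of $[-1,1]$ as $\omega\to 0$. A secondary technical point is justifying uniformly in $x\in[-1,1]$ that the higher-order terms are genuinely $\mathcal{O}(\omega^{2n+2})$ and do not interact with the leading term after taking the max; this is routine once the error function is written as $\omega^{2n+1}g(x) + \omega^{2n+2}h(x,\omega)$ with $g,h$ continuous and bounded on the relevant compact set.
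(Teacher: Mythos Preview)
Your proposal is a reasonable sketch of a direct argument, but the paper's proof is far shorter and takes a completely different route: it simply cites two external results. For $E_{n,\omega}^c$, the identity $E_{n,\omega}^c = \min_{r\in\mathcal{R}_n}\max_{z\in\mathrm{i}[-\omega,\omega]}|r(z)-\mathrm{e}^z|$ places the problem in the framework of Chebyshev approximation on shrinking domains, and the paper invokes \cite[Theorem~14.(iii)]{Ja24cheb} with the substitutions $a_{nn}=(n!)^2/((2n)!(2n+1)!)$ and $t_{2n}=2^{-2n}$ to read off the asymptotic directly. For $E_{n,\omega}^u$, the paper simply cites \cite[Proposition~8.3]{JS23u}. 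That is the entire proof.

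Your outline is essentially a reconstruction of what those two references contain, and the ingredients you identify are the right ones: the Pad\'e constant $(n!)^2/((2n)!(2n+1)!)$ from the local expansion, and the Chebyshev factor $2^{-2n}$ from the extremal property of $T_{2n+1}$ on $[-1,1]$. Your honest flag on Step~3 is well placed: obtaining the sharp constant (with the $2^{-2n}$) for $E_{n,\omega}^u$ does require the small-$\omega$ perturbation analysis of the phase-error equioscillation, showing that the $2n+2$ equioscillation points converge to the Chebyshev extrema --- and that is exactly the content of \cite[Proposition~8.3]{JS23u}, which the paper cites rather than reproves. One minor point: your brief invocation of Theorem~\ref{thm:errinequals} in Step~3 is unnecessary, since $E_{n,\omega}^c\le E_{n,\omega}^u$ already follows from $\mathcal{U}_n\subset\mathcal{R}_n$; the paper's proof of Proposition~\ref{prop:asymerr} does not rely on Theorem~\ref{thm:errinequals} at all. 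In short, your plan is sound but would reproduce a nontrivial amount of work already packaged in \cite{Ja24cheb} and \cite{JS23u}.
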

Before providing the proof of Proposition~\ref{prop:asymerr}, we state the following auxiliary remark.
\begin{remark}\label{rmk:approxeixonscaleddomain}
Chebyshev approximation to $\mathrm{e}^{\mathrm{i} \omega x}$ for $x\in[-1,1]$ can be understood as Chebyshev approximation to $\mathrm{e}^z$ for $z\in\mathrm{i} [-\omega,\omega]$, and we have the identity
\begin{equation}\label{eq:EcasKeps}
E_{n,\omega}^c = \min_{r\in\mathcal{R}_n} \max_{z\in\mathrm{i} [-\omega,\omega]} | r(z) - \mathrm{e}^z|.
\end{equation}
In particular, this implies that $E_{n,\omega}^c$ is monotonically increasing in $\omega$, i.e., for $\omega_1\leq\omega_2$ the interval $\mathrm{i} [-\omega_1,\omega_1]$ is a subset of $\mathrm{i} [-\omega_2,\omega_2]$ which implies $E_{n,\omega_1}^c\leq E_{n,\omega_2}^c$.
\end{remark}

\begin{proof}[Proof of Proposition~\ref{prop:asymerr}]
The representation~\eqref{eq:EcasKeps} for $E_{n,\omega}^c$ fits to the setting of~\cite{Ja24cheb}, namely, as the error of a rational Chebyshev approximant to $f(z)=\mathrm{e}^{z}$ on the scaled domain $\varepsilon K$ where $K=\mathrm{i}[-1,1]$ and $\varepsilon =\omega$.
Thus, for $\omega\to 0$ the error representation in \cite[Theorem~14.(iii)]{Ja24cheb}, substituting $m=n$, $a_{nn}=(n!)^2/((2n)!(2n+1)!)$ and $t_{2n}=2^{-2n}$ therein, shows~\eqref{eq:asymerr} for $E_{n,\omega}^c$. On the other hand, for the error $E_{n,\omega}^u$ the asymptotic representation~\eqref{eq:asymerr} follows~\cite[Proposition~8.3]{JS23u}. 
\end{proof}

\begin{example}
Consider the case $n=0$ and $\omega>0$. For $n=0$ the approximants $r^c$ and $r^u$ correspond to constant numbers. Moreover, $r^c$ complies with a polynomial approximant of degree zero, and following \cite[p.~27]{Me67}, polynomial Chebyshev approximants to $\mathrm{e}^{\mathrm{i} \omega x}$ are symmetric. This carries over to the rational Chebyshev approximant in $\mathcal{R}_n$ for $n=0$, i.e., $\overline{r^c(\mathrm{i} x)}=r^c(-\mathrm{i} x)$, which implies that $r^c$ is a real-valued constant number.

We proceed to specify $r^c$ by first considering the case $\omega\in(0,\pi/2]$. For $r^c\in\mathbb{R}$ the point-wise error satisfies
\begin{equation}\label{eq:Echebn0lowerbound}
|r^c- \mathrm{e}^{\mathrm{i} \omega x}|= \left((\cos(\omega x) - r^c)^2+(\sin(\omega x))^2\right)^{1/2} \geq |\sin(\omega x)|.
\end{equation}
Moreover, for $r^c \equiv \cos \omega$ the point-wise error yields
\begin{equation}\label{eq:Echebn0lowerbound0}
|\cos\omega- \mathrm{e}^{\mathrm{i} \omega x}|
= \left((\cos(\omega x) - \cos \omega)^2+(\sin(\omega x))^2\right)^{1/2},
\end{equation}
which we can further simplify by expanding the quadratic form therein,
\begin{equation}\label{eq:Echebn0lowerbound1}
(\cos(\omega x) - \cos \omega)^2+(\sin(\omega x))^2 
= 1 + (\cos\omega)^2-2\cos\omega\cos(\omega x).
\end{equation}
Since $\cos(\omega x)\geq\cos\omega\geq 0$ for $\omega\in(0,\pi/2]$ and $x\in[-1,1]$, the right-hand side of this identity is bounded by
\begin{equation}\label{eq:Echebn0lowerbound2}
1 + (\cos\omega)^2-2\cos\omega\cos(\omega x) \leq 1 - (\cos\omega)^2
= (\sin\omega)^2.
\end{equation}
Combining~\eqref{eq:Echebn0lowerbound0},~\eqref{eq:Echebn0lowerbound1} and~\eqref{eq:Echebn0lowerbound2} we observe
\begin{equation}\label{eq:Ecupperbound}
|\cos \omega - \mathrm{e}^{\mathrm{i} \omega x}|
\leq \sin \omega,~~~\omega\in(0,\pi/2],~~x\in[-1,1].
\end{equation}
The lower bound~\eqref{eq:Echebn0lowerbound} implies $E_{0,\omega}^c\geq \sin \omega$ since $|\sin(\omega x)|= \sin \omega$ for $x=\pm1$, and~\eqref{eq:Ecupperbound} shows that this lower bound for $E_{0,\omega}^c$ is attained by $r^c=\cos \omega$ for $\omega\in(0,\pi/2]$.
Since the error $E_{0,\omega}^c$ is monotonically increasing with $\omega$, see Remark~\ref{rmk:approxeixonscaleddomain}, we note that for $\omega>\pi/2$ the Chebyshev approximation attains the error $E_{0,\omega}^c\geq 1$. In particular, this error is attained by $r_0\equiv 0$, i.e., $E_{0,\omega}^c=1$ for $\omega\geq \pi/2$.
Thus, the Chebyshev approximant for $n=0$ corresponds to
\begin{equation*}
r^c(x) = 
\left\{\begin{array}{ll}
\cos\omega,~~~&\text{for $\omega\in(0,\pi/2]$, and}\\
0,~~~&\text{for $\omega>\pi/2$},
\end{array}\right.
\end{equation*}
and attains the error
\begin{equation}\label{eq:Ecn0asym}
E_{0,\omega}^c = 
\left\{\begin{array}{ll}
\sin\omega,~~~&\text{for $\omega\in(0,\pi/2]$, and}\\
1,~~~&\text{for $\omega>\pi/2$}.
\end{array}\right.
\end{equation}

We proceed to consider the unitary best approximation. Due to unitarity and symmetry properties, in particular, \cite[Corollary~6.3]{JS23u}, we have $r^u(0)=1$, and for $n=0$ this implies
\begin{equation*}
r^u\equiv1,~~~\text{for}~~\omega\in (0,\pi).
\end{equation*}
We note
\begin{equation*}
|1-\mathrm{e}^{\mathrm{i} \omega x}|
= |\mathrm{e}^{-\mathrm{i} \omega x/2}-\mathrm{e}^{\mathrm{i} \omega x/2}| = 2|\sin(\omega x/2)|,
\end{equation*}
which implies
\begin{equation}\label{eq:Eun0}
E^u_{0,\omega} = 2\sin(\omega/2),~~~\text{for}~~\omega\in(0,\pi).
\end{equation}
Together with~\eqref{eq:Ecn0asym} this implies that $E_{0,\omega}^u$ and $E_{0,\omega}^c$ satisfy
\begin{equation*}
E_{0,\omega}^u, E_{0,\omega}^c = \omega + \mathcal{O}(\omega^2),~~~\text{for}~~\omega\to0,
\end{equation*}
which verifies the result of Proposition~\ref{prop:asymerr} for $n=0$.
Moreover,
\begin{equation*}
E_{0,\omega}^u/2 = \sin(\omega/2) \leq \sin(\omega) = E_{0,\omega}^c,~~~\text{for}~~\omega\in(0,\pi/2),
\end{equation*}
and in a similar manner this holds true for $\omega>\pi/2$ with $E_{0,\omega}^c=1$, and $E^u_{n,\omega}$ as in~\eqref{eq:Eun0} for $\omega\in[\pi/2,\pi)$ and $E^u_{n,\omega}=2$ for $\omega>\pi$~\cite[Proposition~4.5]{JS23u}. Moreover, these identities satisfy $E_{0,\omega}^c<E_{0,\omega}^u$, which verifies the inequalities in Theorem~\ref{thm:errinequals} for the case $n=0$.
See \figurename~\ref{fig:exnzero}
for an illustration of $E_{0,\omega}^c$ and $E_{0,\omega}^u$ over $\omega\in(0,\pi)$.
\end{example}

\begin{figure}
\centering
\includegraphics{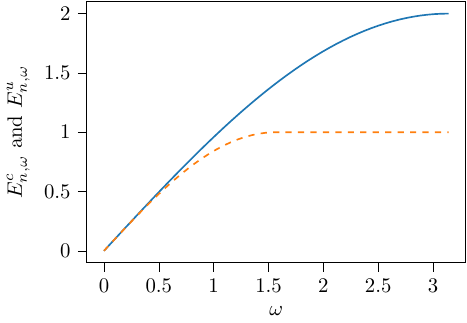}\hspace{1.5cm}
\caption{The errors of the unitary best approximation $E_{n,\omega}^u$ (solid line) and the Chebyshev approximation $E_{n,\omega}^c$ (dashed line)  for degree $n=0$ plotted over $\omega$.}
\label{fig:exnzero}
\end{figure}

\section{Proof of Theorem~\ref{thm:errinequals}}
\label{sec:errorineq}

In Proposition~\ref{prop:ruisnotrc} below, we show that the unitary best approximant is not a Chebyshev approximant for the case $\omega\in(0,(n+1)\pi)$ which is part of the proof of the upper bound in Theorem~\ref{thm:errinequals}. Proposition~\ref{prop:valleepoissontype} further below
provides an auxiliary result which is used to prove the lower bound in Theorem~\ref{thm:errinequals}. The proof of Theorem~\ref{thm:errinequals} is provided at the end of the present section.

\smallskip
The local Kolmogorov criterion~\cite{Gu83,Ru85}, cf.~\cite{IT93} for an overview, yields a necessary condition for local rational best approximants (i.e., approximants which minimize the respective uniform error w.r.t.\ local perturbations in $\mathcal{R}_n$), and consequently, also for the Chebyshev approximation~\eqref{eq:bestrat}. Based on this criterion, we show the following proposition.
\begin{proposition}\label{prop:ruisnotrc}
For $\omega\in(0,(n+1)\pi)$ the unitary best approximant is not a Chebyshev approximant.
\end{proposition}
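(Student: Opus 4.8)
The plan is to argue by contradiction: suppose that for some $\omega\in(0,(n+1)\pi)$ the unitary best approximant $r^u$ is also a Chebyshev approximant, and derive a contradiction with the local Kolmogorov criterion. Recall from the excerpt that for $\omega\in(0,(n+1)\pi)$ the unitary best approximant is uniquely characterized by an equioscillating phase error; write $r^u(\mathrm{i}x)=\mathrm{e}^{\mathrm{i}\phi(x)}$ for a real phase function $\phi$, so that the phase error is $g(x):=\phi(x)-\omega x$. The equioscillation property provides a set of alternation points $x_0<x_1<\dots<x_N$ at which $g$ attains its extremal values $\pm\|g\|_\infty$ with alternating signs; the count $N$ is the one dictated by the characterization in \cite[Section~5]{JS23u}. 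The first step is to translate this phase-error alternation into precise information about the complex error $e(x):=r^u(\mathrm{i}x)-\mathrm{e}^{\mathrm{i}\omega x}$, in particular about its \emph{argument} at the alternation points: since both $r^u(\mathrm{i}x)$ and $\mathrm{e}^{\mathrm{i}\omega x}$ lie on the unit circle, we have $e(x)=\mathrm{e}^{\mathrm{i}\omega x}(\mathrm{e}^{\mathrm{i}g(x)}-1)$, so $|e(x)|=2|\sin(g(x)/2)|$ and $\arg e(x)=\omega x+g(x)/2\pm\pi/2$ up to the appropriate branch. Thus at a maximal phase-error point the direction of $e(x)$ in the complex plane is pinned down explicitly.

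The second step is to invoke the local Kolmogorov criterion for the Chebyshev approximant. The criterion states that if $r\in\mathcal{R}_n$ is a local best approximant to $f$ then there is no function in the tangent cone of $\mathcal{R}_n$ at $r$ (equivalently, no admissible variation $\delta r$) whose real part against $\overline{e(x)/|e(x)|}$ is strictly negative at every point of the maximal error set $M=\{x:|e(x)|=\|e\|_\infty\}$; in other words, the origin lies in the convex hull of $\{\overline{e(x)/|e(x)|}\cdot v(x):x\in M\}$ where $v$ ranges over the relevant variations. The heart of the argument is to exhibit an explicit perturbation that violates this. The natural candidate is a perturbation within $\mathcal{U}_n$ itself pushed slightly outside, or more directly, a perturbation that uniformly \emph{shrinks} the modulus of the error while keeping its argument fixed — for instance replacing $r^u$ by $(1-\epsilon)r^u$ or by $r^u$ composed with a small contraction, so that $\delta r(x)$ points in the direction $-r^u(\mathrm{i}x)=-\mathrm{e}^{\mathrm{i}\phi(x)}$. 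One then computes $\operatorname{Re}\bigl(\overline{e(x)/|e(x)|}\,\delta r(x)\bigr)$ at the maximal-error points and shows, using the phase relation $\arg e(x)=\omega x+g(x)/2\pm\pi/2$ together with the bound $|g(x)|<\pi$ coming from $\omega<(n+1)\pi$ and the structure of $r^u$, that this real part is strictly negative at \emph{all} points of $M$ simultaneously. That contradicts the Kolmogorov criterion, so $r^u$ cannot be a local best approximant, in particular not the Chebyshev approximant.

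The main obstacle I anticipate is twofold. First, one must check that the proposed variation $\delta r$ genuinely lies in the tangent cone of $\mathcal{R}_n$ at $r^u$ (degree count, the possibility of common factors between numerator and denominator of $r^u$, and the admissibility of the direction $-r^u$); here the fact that $r^u$ is unitary of exact degree $n$ for $\omega\in(0,(n+1)\pi)$ should be used, and one may need the explicit form of unitary rational functions (a Blaschke-type product) from \cite{JS23u,JS24}. Second, and more delicate, is verifying that the chosen direction gives a strictly negative pairing at \emph{every} maximal-error point, not merely at some: this requires controlling $\arg e(x)$ on all of $M$, which in turn relies on the equioscillation of the phase error constraining $g$ on $M$ — the maximal modulus error occurs exactly where $|\sin(g(x)/2)|$ is maximal, i.e.\ where $|g(x)|=\|g\|_\infty$, so $M$ coincides with the phase-error alternation set, and there $g(x)/2$ takes only the two values $\pm\|g\|_\infty/2$ with $\|g\|_\infty<\pi$. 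Keeping the branch of the argument consistent across these points, and handling the boundary subtlety that a contraction $(1-\epsilon)r^u$ moves the error toward $-\mathrm{e}^{\mathrm{i}\omega x}$ rather than toward $0$, is where the real care is needed; the bound $E^u_{n,\omega}<2$ for $\omega<(n+1)\pi$ guarantees $\|g\|_\infty<\pi$, which is exactly what makes the sign computation go through.
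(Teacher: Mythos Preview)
Your proposal is correct and follows essentially the same route as the paper: both invoke the local Kolmogorov criterion and exhibit a single explicit variation---your contraction $(1-\epsilon)r^u$ corresponds exactly to the paper's choice $\hat q\equiv 1$ in Ruttan's formulation~\cite{Ru85}---for which the key sign computation reduces to $\operatorname{Re}(\overline{e(\eta_j)}\,\delta r(\eta_j))=\cos\alpha-1<0$ at every equioscillation point, using $\alpha\in(0,\pi)$. The paper streamlines the tangent-cone admissibility issue by appealing to Ruttan's criterion directly (which handles it via the non-degeneracy of $r^u$), but the substance of the argument is identical.
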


Before proving this proposition, we recall some required properties of unitary best approximation from~\cite{JS23u}. We use the notation $p^\dag$ as defined in~\cite{JS23u}, i.e., for a polynomial $p(z) = \sum_{j=0}^n a_j z^j$, where $a_0,\ldots,a_n\in\mathbb{C}$ denote the coefficients of $p$, we let $p^\dag$ denote the polynomial $p^\dag(z) := \sum_{j=0}^n \overline{a}_j (-z)^j$. In particular, $p^\dag$ satisfies $p^\dag(\mathrm{i} x) = \overline{p(\mathrm{i} x)}$ for $x\in\mathbb{R}$.
Following~\cite[Proposition~2.1]{JS23u}, unitary rational functions $r\in\mathcal{U}_n$ are of the form $r=p^\dag/p$ where $p$ denotes a polynomial of degree $\leq n$, and this particularly holds true for the unitary best approximant, namely,
\begin{equation*}
r^u = \widetilde{p}^\dag/ \widetilde{p},~~~\text{for a polynomial $\widetilde{p}$ of degree exactly $n$,}
\end{equation*}
Moreover, $r^u$ is non-degenerate, i.e., $\widetilde{p}^\dag$ and $\widetilde{p}$ have no common zeros~\cite[Theorem~5.1.(ii)]{JS23u}.

For $\omega\in(0,(n+1)\pi)$ the unitary best approximant $r^u$ has the equivalent representation $r^u(\mathrm{i} x) = \mathrm{e}^{\mathrm{i} g(x)}$ where $g$ denotes a unique {\em phase function} as defined in~\cite[sections~4 and~5]{JS23u}. For $\omega\in(0,(n+1)\pi)$ the unitary best approximant $r^u(\mathrm{i} x) = \mathrm{e}^{\mathrm{i} g(x)} \approx \mathrm{e}^{\mathrm{i} \omega x}$ is uniquely characterized by an equioscillating phase error, i.e., there exist equioscillation points $\eta_1<\ldots<\eta_{2n+2}$ with $\eta_1=-1$ and $\eta_{n+2}=1$ for which the underlying phase function satisfies~\cite[eqs.~(5.1) and~(7.5)]{JS23u}, 
\begin{equation}\label{eq:equialpha}
g(\eta_j) -\omega \eta_j = (-1)^{j+1}\alpha,~~~j=1,\ldots,2n+2,
\end{equation}
where $ \alpha = \max_{x\in[-1,1]} |g(x)-\omega x|\in(0,\pi)$.
Following \cite[Corollary~5.2]{JS23u}, the unitary best approximant $r^u$ attains its maximum error exactly at the equioscillation points $\eta_1,\ldots,\eta_{2n+2}$, i.e.
\begin{equation}\label{eq:etamaxerr}
E_{n,\omega}^u = \| r^u - \exp(\omega\cdot)\|
= | r^u(\mathrm{i} \eta_j) - \mathrm{e}^{\mathrm{i} \omega \eta_j}|,~~~j=1,\ldots,2n+2.
\end{equation}

We proceed with the proof of Proposition~\ref{prop:ruisnotrc}.
\begin{proof}[Proof of Proposition~\ref{prop:ruisnotrc}]
Let $\omega\in(0,(n+1)\pi)$ be fixed, and let $r^u=\widetilde{p}^\dag/\widetilde{p}\in\mathcal{U}_n$ denote the unitary best approximant with $r^u(\mathrm{i} x) = \mathrm{e}^{\mathrm{i} g(x)}$ for the corresponding phase function~$g$.

For the remainder of the proof we may consider $r^u(\mathrm{i} x)$ as a function of $x\in[-1,1]$, and as an approximation to $f(x) = \mathrm{e}^{\mathrm{i} \omega x}$. In this setting we consider the local Kolmogorov criterion as in \cite[Theorem~1.2]{Ru85}. To apply this theorem we first recall that the unitary best approximant $r^u=\widetilde{p}^\dag/\widetilde{p}$ is non-degenerate, and thus, its numerator $\widetilde{p}^\dag$ and denominator $\widetilde{p}$ satisfy the condition to be relative prime. Moreover, the points of extreme error of $r^u$ correspond to the equioscillation points $\eta_1,\ldots,\eta_{2n+2}$. Applying \cite[Theorem~1.2]{Ru85}, we observe that $r^u$ is a local best approximation only if for all polynomials $\widehat{q}$ of degree $\leq 2n$ there exists at least one index $j\in\{1,\ldots,2n+2\}$ s.t.
\begin{equation}\label{eq:Regamgle0}
\operatorname{Re}(\overline{\gamma(\eta_j)} \widehat{q}(\eta_j)) \geq 0,
\end{equation}
where
\begin{equation}\label{eq:defgam}
\gamma(x) :=(\mathrm{e}^{\mathrm{i} \omega x} - r^u(\mathrm{i} x) ) \widetilde{p}(\mathrm{i} x)/\overline{\widetilde{p}(\mathrm{i} x)}.
\end{equation}
We proceed to show that~\eqref{eq:Regamgle0} is false for $\widehat{q}\equiv 1$, i.e.,
\begin{equation}\label{eq:Regamgle0g1}
\operatorname{Re}(\gamma(\eta_j)) < 0,~~~j=1,\ldots,2n+2.
\end{equation}
Due to $\overline{\widetilde{p}(\mathrm{i} x)} = \widetilde{p}^\dag(\mathrm{i} x)$ and $r^u=\widetilde{p}^\dag/\widetilde{p}$ we have $\widetilde{p}(\mathrm{i} x)/\overline{\widetilde{p}(\mathrm{i} x)}=r^u(\mathrm{i} x)^{-1}$ for $x\in\mathbb{R}$. Moreover, since $r^u$ is unitary, i.e., $|r(\mathrm{i} x)|=1$, we have $r^u(\mathrm{i} x)^{-1} = \overline{r^u(\mathrm{i} x)}$ for $x\in\mathbb{R}$. Thus,~\eqref{eq:defgam} simplifies to
\begin{equation}\label{eq:gamxsimple}
(\mathrm{e}^{\mathrm{i} \omega x} - r^u(\mathrm{i} x) ) \widetilde{p}(\mathrm{i} x)/\overline{\widetilde{p}(\mathrm{i} x)}
= (\mathrm{e}^{\mathrm{i} \omega x} - r^u(\mathrm{i} x))r^u(\mathrm{i} x)^{-1}
= (\mathrm{e}^{\mathrm{i} \omega x}\overline{r^u(\mathrm{i} x)} -1 ) .
\end{equation}

As a consequence of~\eqref{eq:equialpha}, at the equioscillation points $\eta_j$ the unitary best approximant $r^u(\mathrm{i} x)=\mathrm{e}^{\mathrm{i} g(x)}$ satisfies
\begin{equation}\label{eq:uateta}
\mathrm{e}^{\mathrm{i} \omega \eta_j}\overline{r^u(\mathrm{i} \eta_j)}
= \mathrm{e}^{\mathrm{i} (\omega \eta_j-g(\eta_j))}
=\mathrm{e}^{ (-1)^{j}\mathrm{i}\alpha},~~~j=1,\ldots,2n+2.
\end{equation}
Thus, simplifying $\gamma$~\eqref{eq:defgam} as in~\eqref{eq:gamxsimple} for $x=\eta_j$ and making use of~\eqref{eq:uateta}, we arrive at
\begin{equation*}
\gamma(\eta_j) = \mathrm{e}^{(-1)^{j}\mathrm{i} \alpha} - 1,~~~\text{with}~~\operatorname{Re}(\gamma(\eta_j)) = \cos(\alpha) - 1,~~~\text{for $j=1,\ldots,2n+2$}.
\end{equation*}
Since $\alpha\in(0,\pi)$ for $\omega\in(0,(n+1)\pi)$,
this shows~\eqref{eq:Regamgle0g1} which entails that~\eqref{eq:Regamgle0} his false for $\widehat{q}\equiv 1$.
Following \cite[Theorem~1.2]{Ru85}, the unitary best approximant $r^u$ is not a local best approximation in $\mathcal{R}_n$, and particularly, $r^u$ is not a Chebyshev approximant~\eqref{eq:bestrat}, which proves our assertion.
\end{proof}

We proceed with an auxiliary result which is used to prove the lower bound in Theorem~\ref{thm:errinequals}. This result is based on the interpolatory property of unitary best approximation which is described in~\cite[Corollary~5.2]{JS23u}. Namely, for $\omega\in(0,(n+1)\pi)$ the equioscillation property of the phase error~\eqref{eq:equialpha} yields that $g(x)-\omega x$ changes its sign between two neighbouring points $\eta_j$ and $\eta_{j+1}$ for $j=1,\ldots,2n+1$. Thus, there exist points $x_1,\ldots,x_{2n+1}$ with $x_j\in(\eta_j,\eta_{j+1})$ s.t.\ $g(x_j) = \omega x_j$ which implies
\begin{equation}\label{eq:xjinterpolate}
r^u(\mathrm{i} x_j) = \mathrm{e}^{\mathrm{i} \omega x_j},~~~j=1,\ldots,2n+1.
\end{equation}
In particular, the points $x_1,\ldots,x_{2n+1}$ can be understood as interpolation nodes of $r^u$.

For the following proposition we consider a slightly more general setting, which applies for the unitary best approximant for $\omega\in(0,(n+1)\pi)$, and also covers constant approximants $r^u\equiv \pm1$ in case of $\omega\geq (n+1)\pi$. In particular, we consider approximants $\zeta(x)\approx f(x)$ for $x\in[-1,1]$ where $\zeta\in\mathcal{R}_{n-d}$ denotes a rational function with defect $d\geq0$ and $f\colon[-1,1]\to \mathbb{C}$ is assumed to be continuous. For the present work the relevant case is $f(x) = \mathrm{e}^{\mathrm{i} \omega x}$, and either $d=0$ or $d=n$.
Assume there exist $2n-d+1$ interpolation nodes $x_1,\ldots,x_{2n-d+1}$, i.e.,
\begin{subequations}\label{eq:nonuniforminterpsetting}
\begin{equation}
\zeta(x_j) = f(x_j),~~~j=1,\ldots,2n-d+1.
\end{equation}
Moreover, let $\eta_1,\ldots,\eta_{2n-d+2}$ denote points of local extrema of $|\zeta-f|$, s.t.
\begin{equation}
-1\leq \eta_1<x_1<\eta_2<\ldots<x_{2n-d+1}<\eta_{2n-d+2}\leq 1,
\end{equation}
and 
\begin{equation}
\begin{aligned}
\max_{x\in[-1,x_1)} |\zeta(x)-f(x)|
&= |\zeta(\eta_1)-f(\eta_1)|,\\
\max_{x\in(x_{j-1},x_j)} |\zeta(x)-f(x)|
&= |\zeta(\eta_j)-f(\eta_j)|,~~\text{$j=2,\ldots,2n-d+1$, and}\\
\max_{x\in(x_{2n-d+1},1]} |\zeta(x)-f(x)|
&= |\zeta(\eta_{2n-d+2})-f(\eta_{2n-d+2})|.
\end{aligned}
\end{equation}
\end{subequations}
For the present work, only the uniform case $|\zeta(\eta_j)-f(\eta_j)|=E^u_{n,\omega}$ for $j=1,\ldots,2n-d+2$, is relevant. However, the setting~\eqref{eq:nonuniforminterpsetting}, covering non-uniform cases, has some use in practice, e.g., when a computed approximant satisfies an equioscillation property up to a tolerance.

The following proposition is motivated by results of~\cite{Le86} which show that real rational best approximation to real functions attain at most twice the approximation error of complex rational best approximation.
\begin{proposition}\label{prop:valleepoissontype}
Let $f\colon[-1,1]\to \mathbb{C}$ be a continuous function, and let $\zeta\in\mathcal{R}_{n-d}$ satisfy~\eqref{eq:nonuniforminterpsetting}.
Then, the error of a rational best approximant to $f$ is bounded from below by
\begin{equation}\label{eq:lowerboundonehalf}
\frac{1}{2}\left(\min_{j=1,\ldots,2n-d+2} |\zeta(\eta_j)-f(\eta_j)|\right)
\leq \min_{r\in\mathcal{R}_n} \max_{x\in[-1,1]} |r(x)-f(x)|
\end{equation}
\end{proposition}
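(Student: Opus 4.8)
The plan is to argue by contradiction through a de la Vall{\'e}e--Poussin-type lower bound, in the spirit of~\cite{Le86}; the factor $\tfrac12$ will come from passing to the squared modulus of a complex rational function, which doubles the relevant polynomial degrees, while the hypotheses~\eqref{eq:nonuniforminterpsetting} supply essentially twice as many test points as degrees. Write $\mu := \min_{j=1,\ldots,2n-d+2}|\zeta(\eta_j)-f(\eta_j)|$ and suppose, for contradiction, that some $r\in\mathcal{R}_n$ achieves $E := \max_{x\in[-1,1]}|r(x)-f(x)| < \mu/2$; such an $r$ is automatically pole-free on $[-1,1]$.

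First I would set $h := \zeta - r$ and extract two pointwise estimates from~\eqref{eq:nonuniforminterpsetting}. At the interpolation nodes, $\zeta(x_j)=f(x_j)$ gives $|h(x_j)| = |f(x_j)-r(x_j)| \le E < \mu/2$ for $j=1,\ldots,2n-d+1$, whereas at the points $\eta_j$ the triangle inequality gives $|h(\eta_j)| \ge |\zeta(\eta_j)-f(\eta_j)| - |f(\eta_j)-r(\eta_j)| \ge \mu - E > \mu/2$ for $j=1,\ldots,2n-d+2$. Only the interlacing of the nodes and the sizes $|\zeta(\eta_j)-f(\eta_j)|$ enter here; the extremal property of the $\eta_j$ is not actually needed.

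Next comes the degree bookkeeping. Writing $\zeta=p_1/q_1$ with $\deg p_1,\deg q_1\le n-d$ and $r=p_2/q_2$ with $\deg p_2,\deg q_2\le n$, one has $h = P/Q$ with $P := p_1q_2 - p_2q_1$ and $Q := q_1q_2$, so $\deg P,\deg Q\le 2n-d$, and $Q$ has no zero on $[-1,1]$ since neither $\zeta$ nor $r$ has a pole there. For real $x$, $|h(x)|^2 = P(x)\overline{P}(x)/(Q(x)\overline{Q}(x))$, where $\overline{P},\overline{Q}$ are the polynomials obtained by conjugating the coefficients of $P,Q$, so that $P\overline{P}$ and $Q\overline{Q}$ are real polynomials of degree at most $2(2n-d)=4n-2d$. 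I would then examine
\[
\Phi(x) := |h(x)|^2 - (\mu/2)^2 = \frac{P(x)\overline{P}(x) - (\mu/2)^2\,Q(x)\overline{Q}(x)}{Q(x)\overline{Q}(x)},
\]
whose numerator is a real polynomial of degree at most $4n-2d$ and whose denominator $Q\overline{Q}=|Q|^2$ is strictly positive on $[-1,1]$; hence on $[-1,1]$ the sign of $\Phi$ equals the sign of its numerator. By the two estimates, $\Phi(\eta_j)>0$ and $\Phi(x_j)<0$ for all indices, so $\Phi$ alternates sign along the $4n-2d+3$ interlaced points $\eta_1<x_1<\eta_2<\cdots<x_{2n-d+1}<\eta_{2n-d+2}$, forcing its numerator to have at least $4n-2d+2$ zeros in $(-1,1)$. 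A polynomial of degree at most $4n-2d$ with that many zeros vanishes identically, so $|h|\equiv\mu/2$ on $[-1,1]$, contradicting $|h(x_1)|\le E<\mu/2$. Thus no $r\in\mathcal{R}_n$ can have error below $\mu/2$, which is~\eqref{eq:lowerboundonehalf}.

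I expect the degree bookkeeping to be the step needing the most care: one must justify $\zeta-r\in\mathcal{R}_{2n-d}$, track how this degree doubles under $|\cdot|^2$, and use pole-freeness on $[-1,1]$ to turn ``sign changes of $\Phi$'' into ``roots of a polynomial of controlled degree''. The only additional point is to dispose of the degenerate case in which that polynomial is identically zero, which is immediate from $|h(x_1)|<\mu/2$; the rest is the routine sign-counting argument.
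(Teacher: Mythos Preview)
Your proof is correct and follows essentially the same route as the paper: assume a better approximant $r$ exists, compare $|\zeta-r|$ to the threshold $\mu/2$ at the interlaced points $\eta_j,x_j$, and use that $|\zeta-r|^2-(\mu/2)^2$ is (the quotient by a positive function of) a real polynomial of degree at most $2(2n-d)$ which would need $2(2n-d)+2$ zeros, forcing it to vanish identically and yielding a contradiction. The only cosmetic difference is that the paper names the zero locations $y_j$ via the intermediate value theorem whereas you count sign changes directly; the degree bookkeeping and the contradiction are identical.
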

\begin{proof}
To simplify the notation in the present proof, we also refer to rational functions $r\in\mathcal{R}_k$ as $(k,k)$-rational functions.
We proceed to prove~\eqref{eq:lowerboundonehalf} by contradiction. Namely, assume that there exists $r\in\mathcal{R}_n$ with
\begin{equation}\label{eq:lowerboundonehalfwrong}
\max_{x\in[-1,1]} |r(x)-f(x)| < \varepsilon/2,~~~\text{for $\varepsilon := \min_{j=1,\ldots,2n-d+2}  |\zeta(\eta_j)-f(\eta_j)|$}.
\end{equation}

Under this assumption, we proceed to show that there exist points in each of the intervals $(\eta_j,x_j)$ and $(x_j,\eta_{j+1})$, $j=1,\ldots,2n-d+1$, at which $|\zeta-r|$ attains the value $\varepsilon/2$. For the interpolation node $x_j$ we have $\zeta(x_j)=f(x_j)$ and~\eqref{eq:lowerboundonehalfwrong} implies
\begin{equation}\label{eq:rmuatxj}
|\zeta(x_j)-r(x_j)| < \varepsilon/2,~~~j=1,\ldots,2n-d+1.
\end{equation}
On the other hand, for the points $\eta_j$ we recall $|\zeta(\eta_j)-f(\eta_j)|\geq \varepsilon$ and $|r(\eta_j)-f(\eta_j)|< \varepsilon/2$. Using an inverse triangular inequality we observe
\begin{equation}\label{eq:rmuatetaj}
|\zeta(\eta_j)-r(\eta_j)|
\geq ||\zeta(\eta_j)-f(\eta_j)| - |r(\eta_j)-f(\eta_j)||
> \varepsilon/2,
\end{equation}
for $j=1,\ldots,2n-d+2$.
Combining~\eqref{eq:rmuatxj} and~\eqref{eq:rmuatetaj}, in combination with continuity arguments, we observe that there exist points $y_1,\ldots,y_{2(2n-d+1)}$ with $y_{2j-1}\in(\eta_j,x_j)$ and $y_{2j} \in(x_j,\eta_{j+1})$ for $j=1,\ldots,2n-d+1$ s.t.
\begin{equation}\label{eq:rmuzeros}
|\zeta(y_j) - r(y_j)| = \varepsilon/2,~~~ j=1,\ldots,2(2n-d+1).
\end{equation}
Since $\zeta\in\mathcal{R}_{n-d}$ and $r\in\mathcal{R}_n$, the difference $\zeta-r$ is a rational function of degree $(2n-d,2n-d)$, and for $x\in\mathbb{R}$,
$$
|\zeta(x)-r(x)|^2 = (\overline{\zeta(x)}-\overline{r(x)})(\zeta(x)-r(x)),
$$
is a rational function of degree $(2(2n-d),2(2n-d))$.
In particular, for $x\in\mathbb{R}$ this carries over to $|\zeta(x)-r(x)|^2-\varepsilon^2/4$. Since this rational function has $2(2n-d+1)$ zeros in $\mathbb{R}$, namely, $y_1,\ldots,y_{2(2n-d+1)}$ as in~\eqref{eq:rmuzeros} and $2(2n-d+1) > 2(2n-d)$, it is zero for all $x\in\mathbb{R}$ and we conclude
$$
|\zeta(x)-r(x)| = \frac{\varepsilon}{2},~~~x\in\mathbb{R}.
$$
However, this is a contradiction to~\eqref{eq:rmuatxj} and~\eqref{eq:rmuatetaj}. Thus, there exists no $r\in\mathcal{R}_n$ with $\max_{x\in[-1,1]}|r(x)-f(x)|<\varepsilon/2$ which proves~\eqref{eq:lowerboundonehalf}.
\end{proof}
Following~\cite[Remark~2]{Le86}, the inequality~\eqref{eq:lowerboundonehalf} might be strict under additional conditions, however, these conditions are not fully clear to the authors of the present work.

We proceed with the proof of Theorem~\ref{thm:errinequals}.
\begin{proof}[Proof of Theorem~\ref{thm:errinequals}]
We first show~\eqref{eq:inequalityerrors} for the case $\omega\in(0,(n+1)\pi)$. To show the upper bound therein, we note that the inequality $E^c_{n,\omega}\leq E^u_{n,\omega}$ directly follows from $\mathcal{U}_n\subset \mathcal{R}_n$. Following Proposition~\ref{prop:ruisnotrc}, the unitary best approximant is not a Chebyshev approximant in $\mathcal{R}_n$ for $\omega\in(0,(n+1)\pi)$, i.e., $E^c_{n,\omega}\neq E^u_{n,\omega}$, which implies that this inequality is strict.

We proceed to show the lower bound in~\eqref{eq:inequalityerrors} for the case $\omega\in(0,(n+1)\pi)$. Let $r^u(\mathrm{i} x) \approx \mathrm{e}^{\mathrm{i} \omega x}$ denote the unitary best approximant. We recall that for $\omega\in(0,(n+1)\pi)$ the unitary best approximant attains equioscillation points $\eta_1<\ldots<\eta_{2n+2}$ with $|r^u(\mathrm{i} \eta_j)-\mathrm{e}^{\mathrm{i} \omega \eta_j}| = E_{n,\omega}^u$ for $j=1,\ldots,2n+2$ as in~\eqref{eq:etamaxerr}, and interpolation nodes $x_1<\ldots<x_{2n+1}$ as in~\eqref{eq:xjinterpolate} with $x_j\in(\eta_j,\eta_{j+1})$ for $j=1,\ldots,2n+1$. Thus, we may apply Proposition~\ref{prop:valleepoissontype} for $f(x) = \mathrm{e}^{\mathrm{i} \omega x}$, $\zeta(x) = r^u(\mathrm{i} x)$, $d=0$, and
\begin{equation}\label{eq:conditionrutoProp6}
\min_{j=1,\ldots,2n-d+2} |\zeta(\eta_j)-f(\eta_j)| =E_{n,\omega}^u,~~~\text{and}~~
\min_{r\in\mathcal{R}_n} \max_{x\in[-1,1]} |r(x)-f(x)| = E_{n,\omega}^c.
\end{equation}
Consequently, the lower bound in~\eqref{eq:inequalityerrors} follows from~\eqref{eq:lowerboundonehalf} for $\omega\in(0,(n+1)\pi)$.

It remains to show~\eqref{eq:inequalityerrors} for the case $\omega\geq (n+1)\pi$. Following \cite[Proposition~4.5]{JS23u}, any $r^u\in\mathcal{U}_n$ is a unitary best approximation with $ E_{n,\omega}^u = 2$ in this case. We proceed to show $E_{n,\omega}^c \geq 1$ by applying Proposition~\ref{prop:valleepoissontype} for $r^u\equiv (-1)^n \in\mathcal{U}_n$ which can be understood as rational function in $\mathcal{R}_0$, i.e., we have the defect $d=n$. Define the points $ \eta_j = (2(j-1)-n-1)\pi/\omega\in[-1,1]$ for $j=1,\ldots,n+2$ and $ x_j = (2(j-1)-n)\pi/\omega\in(\eta_j,\eta_{j+1})$ for $j=1,\ldots,n+1$.
Since $\mathrm{e}^{\mathrm{i} \omega x_j} = (-1)^n$ for $j=1,\ldots,n+1$, the points $x_1,\ldots,x_{n+1}$ can be understood as interpolation nodes of $r^u$, i.e., $r^u = \mathrm{e}^{\mathrm{i} \omega x_j} $.
Moreover since $\mathrm{e}^{\mathrm{i} \omega \eta_j} = (-1)^{n+1}$ for $j=1,\ldots,n+2$, the approximant $r^u$ attains its maximal error at the points $\eta_1,\ldots,\eta_{n+2}$, i.e., $|r^u -  \mathrm{e}^{\mathrm{i} \omega \eta_j}| = 2$ for $j=1,\ldots,n+2$. Applying Proposition~\ref{prop:valleepoissontype} for $f(x) = \mathrm{e}^{\mathrm{i} \omega x}$, $\zeta(x) \equiv r^u$, $d=n$, and~\eqref{eq:conditionrutoProp6} with $E^u_{n,\omega}=2$, we conclude $E_{n,\omega}^c\geq E_{n,\omega}^u/2 = 1$. Since $r^c\equiv 0\in\mathcal{R}_n$ attains this lower bound we conclude $E_{n,\omega}^c= 1$, which proves our assertion for $\omega\geq (n+1)\pi$.

Moreover, for $\omega =0$ we have $\mathrm{e}^{\mathrm{i} \omega x}\equiv 1$, and thus, $r^u\equiv r^c\equiv 1\in\mathcal{U}_n$ is an exact approximation which shows $E_{n,\omega}^u= E_{n,\omega}^c=0$ in this case.
\end{proof}


\begin{thebibliography}{HLW06}

\bibitem[EW76]{EW76}
S.~Ellacott and J.~Williams.
\newblock Rational {C}hebyshev approximation in the complex plane.
\newblock {\em SIAM J. Numer. Anal.}, 13(3):310--323, 1976.
\newblock \href {https://doi.org/10.1137/0713028} {\path{doi:10.1137/0713028}}.

\bibitem[GT83]{GT83}
M.H.~Gutknecht and L.N.~Trefethen.
\newblock Nonuniqueness of best rational {C}hebyshev approximations on the unit disk.
\newblock {\em J. Approx. Theory}, 39(3):275--288, 1983.
\newblock \href {https://doi.org/10.1016/0021-9045(83)90099-0} {\path{doi:10.1016/0021-9045(83)90099-0}}.

\bibitem[Gut83]{Gu83}
M.H.~Gutknecht.
\newblock On complex rational approximation.
\newblock In H.~Werner, L.~Wuytack, E.~Ng, and H.J.~B{\"u}nger, editors, {\em Computational Aspects of Complex Analysis}, pages 79--101, Dordrecht, 1983. Springer Netherlands.
\newblock \href {https://doi.org/10.1007/978-94-009-7121-9_3} {\path{doi:10.1007/978-94-009-7121-9_3}}.

\bibitem[HLW06]{HLW06}
E.~Hairer, C.~Lubich, and G.~Wanner.
\newblock {\em Geometric Numerical Integration: Structure-Preserving Algorithms for Ordinary Differential Equations}.
\newblock Spring{\-}er-Ver{\-}lag, Berlin, 2nd edition, 2006.
\newblock \href {https://doi.org/10.1007/3-540-30666-8} {\path{doi:10.1007/3-540-30666-8}}.

\bibitem[Hof21]{Ho21}
C.~Hofreither.
\newblock An algorithm for best rational approximation based on barycentric rational interpolation.
\newblock {\em Numer. Algorithms}, 88(1):365--388, 2021.
\newblock \href {https://doi.org/10.1007/s11075-020-01042-0} {\path{doi:10.1007/s11075-020-01042-0}}.

\bibitem[HW02]{HW02}
E.~Hairer and G.~Wanner.
\newblock {\em Solving Ordinary Differential Equations II}.
\newblock Spring{\-}er-Ver{\-}lag, Berlin, 2002.
\newblock \href {https://doi.org/10.1007/978-3-642-05221-7} {\path{doi:10.1007/978-3-642-05221-7}}.

\bibitem[IT93]{IT93}
M.-P.~Istace and J.-P.~Thiran.
\newblock On computing best {C}hebyshev complex rational approximants.
\newblock {\em Numer. Algorithms}, 5(6):299–308, 1993.
\newblock \href {https://doi.org/10.1007/bf02108464} {\path{doi:10.1007/bf02108464}}.

\bibitem[Jaw24]{Ja24cheb}
T.~Jawecki.
\newblock The error of {C}hebyshev approximations on shrinking domains.
\newblock preprint at https://arxiv.org/abs/2410.04885, 2024.
\newblock \href {https://arxiv.org/abs/2410.04885} {\path{arXiv:2410.04885}}.

\bibitem[JS23]{JS23u}
T.~Jawecki and P.~Singh.
\newblock Unitary rational best approximations to the exponential function.
\newblock preprint at https://arxiv.org/abs/2312.13809, 2023.
\newblock \href {https://arxiv.org/abs/2312.13809} {\path{arXiv:2312.13809}}.

\bibitem[JS24]{JS24}
T.~Jawecki and P.~Singh.
\newblock Unitarity of some barycentric rational approximants.
\newblock {\em IMA J. Numer. Anal.}, 44(4):2070--2089, 2024.
\newblock \href {https://doi.org/10.1093/imanum/drad066} {\path{doi:10.1093/imanum/drad066}}.

\bibitem[Lev86]{Le86}
A.L.~Levin.
\newblock On the degree of complex rational approximation to real functions.
\newblock {\em Constr. Approx.}, 2(1):213--219, 1986.
\newblock \href {https://doi.org/10.1007/BF01893427} {\path{doi:10.1007/BF01893427}}.

\bibitem[Lub08]{Lu08}
C.~Lubich.
\newblock {\em From Quantum to Classical Molecular Dynamics; Reduced Models and Numerical Analysis}.
\newblock Zurich lectures in advanced mathematics. Europ. Math. Soc., Z{\"u}rich, 2008.
\newblock \href {https://doi.org/10.4171/067} {\path{doi:10.4171/067}}.

\bibitem[Mei67]{Me67}
G.~Meinardus.
\newblock {\em Approximation of Functions: Theory and Numerical Methods}.
\newblock Springer tracts in natural philosophy. Spring{\-}er-Ver{\-}lag, Berlin, 1967.
\newblock \href {https://doi.org/10.1007/978-3-642-85643-3} {\path{doi:10.1007/978-3-642-85643-3}}.

\bibitem[MVL03]{ML03}
C.~Moler and C.~Van~Loan.
\newblock Nineteen dubious ways to compute the exponential of a matrix, twenty-five years later.
\newblock {\em SIAM Rev.}, 45(1):3--49, 2003.
\newblock \href {https://doi.org/10.1137/S00361445024180} {\path{doi:10.1137/S00361445024180}}.

\bibitem[NST18]{NST18}
Y.~Nakatsukasa, O.~S{\`e}te, and L.N.~Trefethen.
\newblock The {AAA} algorithm for rational approximation.
\newblock {\em SIAM J. Sci. Comput.}, 40(3):A1494--A1522, 2018.
\newblock \href {https://doi.org/10.1137/16M1106122} {\path{doi:10.1137/16M1106122}}.

\bibitem[NT20]{NT20}
Y.~Nakatsukasa and L.N.~Trefethen.
\newblock An algorithm for real and complex rational minimax approximation.
\newblock {\em SIAM J. Sci. Comput.}, 42(5):A3157--A3179, 2020.
\newblock \href {https://doi.org/10.1137/19M1281897} {\path{doi:10.1137/19M1281897}}.

\bibitem[Rut85]{Ru85}
A.~Ruttan.
\newblock A characterization of best complex rational approximants in a fundamental case.
\newblock {\em Constr. Approx.}, 1(1):287–296, 1985.
\newblock \href {https://doi.org/10.1007/bf01890036} {\path{doi:10.1007/bf01890036}}.

\bibitem[SV78]{SV78}
E.B.~Saff and R.S.~Varga.
\newblock Nonuniqueness of best complex rational approximations to real functions on real intervals.
\newblock {\em J. Approx. Theory}, 23(1):78--85, 1978.
\newblock \href {https://doi.org/10.1016/0021-9045(78)90081-3} {\path{doi:10.1016/0021-9045(78)90081-3}}.

\bibitem[TI93]{TI93}
J.-P.~Thiran and M.-P.~Istace.
\newblock Optimality and uniqueness conditions in complex rational {C}hebyshev approximation with examples.
\newblock {\em Constr. Approx.}, 9(1):83–103, 1993.
\newblock \href {https://doi.org/10.1007/bf01229337} {\path{doi:10.1007/bf01229337}}.

\bibitem[Tre13]{Tre13}
L.N.~Trefethen.
\newblock {\em Approximation Theory and Approximation Practice}.
\newblock Society for Industrial and Applied Mathematics, Philadelphia, PA, USA, 2013.
\newblock \href {https://doi.org/10.1137/1.9780898717778} {\path{doi:10.1137/1.9780898717778}}.

\bibitem[Wal31]{Wa31}
J.L.~Walsh.
\newblock The existence of rational functions of best approximation.
\newblock {\em Trans. Amer. Math. Soc.}, 33(3):668--689, 1931.
\newblock \href {https://doi.org/10.1090/s0002-9947-1931-1501609-5} {\path{doi:10.1090/s0002-9947-1931-1501609-5}}.

\bibitem[Wil79]{Wi79}
J.~Williams.
\newblock Characterization and computation of rational {C}hebyshev approximations in the complex plane.
\newblock {\em SIAM J. Numer. Anal.}, 16(5):819–827, 1979.
\newblock \href {https://doi.org/10.1137/0716061} {\path{doi:10.1137/0716061}}.

\bibitem[Wul80]{Wu80}
D.E.~Wulbert.
\newblock On the characterization of complex rational approximations.
\newblock {\em Illinois J. Math.}, 24(1), 1980.
\newblock \href {https://doi.org/10.1215/ijm/1256047801} {\path{doi:10.1215/ijm/1256047801}}.

\end{thebibliography}
\end{document}